\newtheorem{thm}{Theorem}[section]
\newtheorem{lem}[thm]{Lemma}
\theoremstyle{definition}
\newtheorem{defn}[thm]{Definition}
\newcommand{\cl}[1]{\ensuremath{\overline{{#1}}}}
\newcommand{\ep}{\varepsilon}
\newcommand{\map}[3]{\ensuremath{{#1}:{#2}\to{#3}}}
\newcommand{\n}[1]{\ensuremath{\left\|{#1}\right\|}}
\newcommand{\ndot}{\ensuremath{\left\|\cdot\right\|}}
\newcommand{\pn}[2]{\ensuremath{\left\|{#1}\right\|_{#2}}}
\newcommand{\pndot}[1]{\ensuremath{\left\|\cdot\right\|_{#1}}}
\newcommand{\Q}{\mathbb{Q}}
\newcommand{\R}{\mathbb{R}}
\newcommand{\set}[2]{\ensuremath{\left\{{#1}\;:\;\,{#2}\right\}}}
\newcommand{\st}{($*$)}
\newcommand{\tri}{{\displaystyle |\kern-.9pt|\kern-.9pt|}}
\newcommand{\tn}[1]{\ensuremath{\left|\kern-.9pt\left|\kern-.9pt\left|{#1}\right|\kern-.9pt\right|\kern-.9pt\right|}}
\newcommand{\tndot}{\ensuremath{\left|\kern-.9pt\left|\kern-.9pt\left|\cdot\right|\kern-.9pt\right|\kern-.9pt\right|}}
\newcommand{\ts}{\textstyle}
\DeclareMathOperator{\aspan}{span}
\DeclareMathOperator{\conv}{conv}
\DeclareMathOperator{\ext}{ext}
\begin{document}
\title{A topological characterization of dual strict convexity in Asplund spaces}
\begin{abstract}
Let $X$ be an Asplund space. We show that the existence of an equivalent norm on $X$ having a strictly convex dual norm is equivalent to the dual unit sphere $S_{X^*}$ (equivalently $X^*$) possessing a non-linear topological property called {\st}, which was introduced by J.~Orihuela, S.~Troyanski and the author.
\end{abstract}

\author{Richard J.\ Smith}
\address{School of Mathematical Sciences, University College Dublin, Belfield, Dublin 4, Ireland}
\email{richard.smith@maths.ucd.ie}
\thanks{The author wishes to thank J.~Orihuela, M.~Raja and S.~Troyanski for the stimulating conversations that helped to inspire this work, following a visit to the University of Murcia, Spain, in March 2019.}
\subjclass[2010]{46B03, 46B26}
\keywords{Norm, strictly convex, rotund, Asplund space}
\date{\today}
\maketitle

\section{Introduction}\label{intro}

All Banach spaces considered in this paper are real and all topological spaces are Hausdorff, with one explicitly stated exception. In \cite{lindenstrauss:76}, Lindenstrauss asked whether it is possible to characterize Banach spaces that admit an equivalent strictly convex norm. There has been a large collective effort on the part of a number of mathematicians to find such characterizations. The reader is referred to the Introduction of \cite{ost:12} for an account of this journey. In more recent years, concerning a number of questions in renorming theory in general, it has been recognised that effective characterizations can be given, at least in part, using the language of topology. We shall consider the following topological property.

\begin{defn}[{\cite[Definition 2.6]{ost:12}}]\label{defn_star}
We say that a topological space $X$ has {\st} if there exists a sequence $(\mathscr{U}_j)_{j<\omega}$ of families of open subsets of $X$, with the property that given any $x,y \in X$, there exists $j < \omega$ such that
\begin{enumerate}
\item $\{x,y\} \cap \bigcup\mathscr{U}_j$ is non-empty, and
\item $\{x,y\} \cap U$ is at most a singleton for all $U \in \mathscr{U}_j$.
\end{enumerate}
A sequence witnessing the {\st}-property on $X$ is called a \emph{{\st}-sequence}.
\end{defn}

The {\st} property is closely related to the so-called \emph{$G_\delta$-diagonal} property. A space $X$ is said to have a \emph{$G_\delta$-diagonal} if the diagonal is a $G_\delta$ set in $X^2$. Equivalently, $X$ has a $G_\delta$-diagonal if it possesses a \text{{\st}-sequence} in which every family is also a cover of $X$ (which renders property (1) above redundant). Such a sequence of covers is called a \emph{$G_\delta$-diagonal sequence}. The {\st} property also generalises the property of being a \emph{Gruenhage space}, which we won't define here. These properties have been studied in a purely topological context, as well as in relation to strictly convex norms on Banach spaces. We refer the reader to \cite{for:16,for:19,gru:84,ost:12,smith:09,smith:12} and the references therein for more details. 

Given a Banach space $X$, a subset $B\subseteq X$ and a norming subspace $F \subseteq X^*$, we say that $(B,\sigma(X,F))$ has \emph{{\st} with slices} if it is possible to find families $\mathscr{U}_j$ as in Definition \ref{defn_star}, satisfying the further condition that every element $U \in \bigcup_{j<\omega}\mathscr{U}_j$ is a $\sigma(X,F)$-open \emph{slice} of $B$, that is, $U$ is the intersection of $B$ with a $\sigma(X,F)$-open half-space of $X$. We have the following characterization of the existence of equivalent strictly convex norms on Banach spaces. We denote the unit sphere of $X$ by $S_X$.

\begin{thm}[{\cite[Theorem 2.7]{ost:12}}]\label{thm_star_slices} Let $X$ be a Banach space and let $F \subseteq X^*$ be a $1$-norming subspace. Then the following are equivalent.
\begin{enumerate}
\item $X$ admits an equivalent $\sigma(X,F)$-lower semicontinuous strictly convex norm;
\item $(X,\sigma(X,F))$ has {\st} with slices;
\item $(S_X,\sigma(X,F))$ has {\st} with slices.
\end{enumerate}
\end{thm}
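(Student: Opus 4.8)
The plan is to prove the cycle of implications $(1)\Rightarrow(2)\Rightarrow(3)\Rightarrow(1)$, since $(2)\Rightarrow(3)$ is trivial (restriction of the families $\mathscr{U}_j$ to slices of $S_X$, noting that a slice of $X$ meets $S_X$ in a slice of $S_X$). Let me think through each piece.

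$(1)\Rightarrow(2)$: Given a strictly convex norm $\tn{\cdot}$ that is $\sigma(X,F)$-lower semicontinuous. The key idea: strict convexity means the unit sphere has no line segments. I want to build a $(*)$-sequence from slices. The standard approach uses the rational levels of the norm. For rationals $p<q$, consider the annular region, and cover it with slices that separate points at different norm levels or points on the same level that are "far apart" in direction. Actually, the cleaner approach: for each pair of rationals $p < q$ and each $n$, consider finite families of slices $\{x : \langle f, x\rangle > q\} \cap B$... I'd need the families to distinguish any two points $x, y$. If $\tn{x} \neq \tn{y}$, a rational level separates them easily. If $\tn{x} = \tn{y}$ but $x \neq y$, strict convexity tells me $\tn{\frac{x+y}{2}} < \tn{x}$, so $\frac{x+y}{2}$ lies strictly inside the sphere — meaning some functional $f$ separates the directions, and a slice at the right level catches exactly one of $x, y$.

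$(3)\Rightarrow(1)$: This is where I expect the main obstacle. Given $(S_X, \sigma(X,F))$ has $(*)$ with slices, I must manufacture an equivalent strictly convex norm. The technique (due to the Moltó–Orihuela–Troyanski–Valdivia school) is to encode the slice data as a sequence of functionals and build the norm via an infinite convex combination. Concretely: each slice $U \in \mathscr{U}_j$ has the form $\{x \in S_X : \langle f_U, x\rangle > \alpha_U\}$; I'd take the associated functionals $f_U \in F$ and form, for each $j$, a bounded linear operator or a convex symmetric function $\varphi_j$ (e.g. a sup or a weighted $\ell_2$-sum of the $\langle f_U, \cdot\rangle$ over $U \in \mathscr{U}_j$) whose strict convexity on segments is triggered exactly when the slice structure separates two points. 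Then define $\tn{x}^2 = \n{x}^2 + \sum_j 2^{-j} \psi_j(x)^2$ or similar, and verify: (a) it is an equivalent $\sigma(X,F)$-lower semicontinuous norm, and (b) strict convexity, by showing that if $\tn{\cdot}$ fails to be strict on a segment $[x,y]$, then $x,y$ are never separated by any $\mathscr{U}_j$, contradicting $(*)$.

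The hard part will be arranging the functional-encoding in step $(3)\Rightarrow(1)$ so that each family $\mathscr{U}_j$ contributes a \emph{single} well-defined convex function of $x$, rather than an unstructured collection of half-spaces. The difficulty is that $\mathscr{U}_j$ may be an arbitrary (even uncountable) family of slices with no a priori bound on the functionals or levels, so one cannot naively sum over $U \in \mathscr{U}_j$. The resolution I would pursue is to pass to the indicator-type function $\theta_j(x) = \sup\{\,\alpha_U - \langle f_U,x\rangle : U \in \mathscr{U}_j\,\}^+$ measuring how deeply $x$ sits inside the union $\bigcup \mathscr{U}_j$, combined with a second function recording \emph{which} slice is (nearly) optimal, so that equality along a segment forces both endpoints into the same geometric position relative to every family. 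One then checks, using condition (2) of Definition \ref{defn_star} (each $U$ meets $\{x,y\}$ in at most a singleton) together with condition (1) (some family catches at least one of $x,y$), that a nontrivial segment on the unit sphere of $\tndot$ would violate the separation guaranteed by the $(*)$-sequence. Making these auxiliary functions jointly convex, symmetric, and $\sigma(X,F)$-lower semicontinuous while preserving the separation property is the technical crux, and I expect it to rely on a careful Moltó–Orihuela–Troyanski–Valdivia-style convexification lemma rather than on any single clever functional.
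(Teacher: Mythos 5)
First, a point of order: the paper does not prove this theorem; it is quoted from \cite[Theorem 2.7]{ost:12}, and the only comment offered is that (2) $\Rightarrow$ (3) is trivial because {\st} is hereditary. So the comparison target is really the cited source together with the machinery this paper reproduces in Section \ref{sect_F-distance}. Against that, your (1) $\Rightarrow$ (2) is essentially correct once tightened: index the families by rationals $p$, let $\mathscr{U}_p$ consist of all $\sigma(X,F)$-open half-spaces disjoint from the ball $\set{x}{\tn{x}\leqslant p}$ (which is $\sigma(X,F)$-closed by lower semicontinuity, so Hahn--Banach separation over $F$ applies), and note that no member of $\mathscr{U}_p$ can contain both $x$ and $y$ once $\tn{\frac{1}{2}(x+y)}\leqslant p$, while some member contains $x$ if $\tn{x}>p$; strict convexity guarantees such a rational $p$ exists for every pair, whether or not $\tn{x}=\tn{y}$.

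The genuine gap is (3) $\Rightarrow$ (1), which you leave as a plan with an acknowledged unresolved ``technical crux''. Both of your specific worries are resolved by Section \ref{sect_F-distance} of this paper. The unbounded levels are handled by writing each slice as $S_X\cap H_{f,a}$ with $f\in S_F$ and splitting each $\mathscr{U}_j$ into the countably many subfamilies with $a\geqslant -m$, $m<\omega$, so that each resulting index set $\Gamma$ satisfies \eqref{eqn_21} and the function $\varphi$ there is well defined; note also that your $\theta_j$ has the sign reversed --- the useful quantity is $\sup\set{f_U(x)-a_U}{U\in\mathscr{U}_j}$, which is positive exactly when $x$ lies in some $H_{f_U,a_U}$. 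More importantly, no ``second function recording which slice is nearly optimal'' is needed: Lemma \ref{convexity_lem} is precisely the statement that if $\varphi_j(x)=\varphi_j(y)=\varphi_j(\frac{1}{2}(x+y))=\lambda>0$ then a \emph{single} pair $(f,a)\in\Gamma_j$ satisfies $f(x)-a,\,f(y)-a>\frac{1}{2}\lambda>0$, i.e.\ one slice of $\mathscr{U}_j$ contains both points, violating Definition \ref{defn_star}(2); whereas $\lambda=0$ means both points avoid $\bigcup\mathscr{U}_j$, violating (1). The remaining care is that $\varphi_j$ is neither symmetric nor homogeneous, so one should not simply add $\sum_j c_j\varphi_j(x)^2$ to $\n{x}^2$; instead take Minkowski functionals of the symmetric convex $\sigma(X,F)$-closed sets $\set{x}{\varphi_j(x)^2+\varphi_j(-x)^2\leqslant r}$ and sum their squares, exactly as is done for $\tndot$ in Section \ref{sect_proof}. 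With that construction the standard convexity reductions produce the common values $\lambda_j$ and the contradiction above, with no derivation process and no further convexification lemma required.
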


Observe that Theorem \ref{thm_star_slices} (2) $\Rightarrow$ (3) is trivially true as it is easily seen that the {\st} property is hereditary, that is, it passes to subspaces. It should be recognised that Theorem \ref{thm_star_slices} is a linear-topological characterization of the existence of such norms, and is not fully non-linear. In the interests of obtaining a better and more easily verifiable characterization, it is natural to ask whether the additional slice condition in Theorem \ref{thm_star_slices} (2) and (3) is necessary. In general, it is indeed necessary:~while the existence of an equivalent $\sigma(X,F)$-lower semicontinuous strictly convex norm implies that $(X,\sigma(X,F))$ has {\st}, the converse is false -- see \cite[Theorem 2.7 and Example 1]{ost:12}.

On the other hand, if we consider dual spaces $X^*$, sometimes we are able to remove the additional slice condition. For the rest of the Introduction, we consider dual spaces $X^*$ only and fix $F=X\subseteq X^{**}$, so that $\sigma(X^*,F)$ is simply the usual $w^*$-topology on $X^*$.

\begin{thm}[{\cite[Theorem 3.1]{ost:12}}]\label{thm_scattered}
Let $K$ be a scattered compact space. Then the following are equivalent.
\begin{enumerate}
\item $C(K)^*$ admits an equivalent strictly convex dual norm;
\item $(C(K)^*,w^*)$ has {\st} with slices;
\item $(C(K)^*,w^*)$ has {\st};
\item $K$ has {\st}.
\end{enumerate}
\end{thm}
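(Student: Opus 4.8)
The plan is to establish the cycle $(1)\Rightarrow(2)\Rightarrow(3)\Rightarrow(4)\Rightarrow(1)$, using Theorem \ref{thm_star_slices} to absorb the linear part and reserving the genuinely new work for the two implications that involve $K$ itself. First I would dispatch $(1)\Leftrightarrow(2)$ by applying Theorem \ref{thm_star_slices} with the underlying space taken to be $C(K)^*$ and the norming subspace taken to be $F=C(K)$, viewed canonically inside $C(K)^{**}$. One checks that $C(K)$ is $1$-norming for $C(K)^*$ (since $\n{\mu}=\sup\{|\mu(f)|:f\in C(K),\ \n{f}\le1\}$), that $\sigma(C(K)^*,C(K))$ is exactly the $w^*$-topology, and that an equivalent norm on the dual $C(K)^*$ is a dual norm precisely when it is $w^*$-lower semicontinuous. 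With these identifications the equivalence Theorem \ref{thm_star_slices}(1)$\Leftrightarrow$(2) reads verbatim as our $(1)\Leftrightarrow(2)$. The implication $(2)\Rightarrow(3)$ is free, since {\st} with slices is formally stronger than bare {\st}.

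Next I would treat $(3)\Rightarrow(4)$ by embedding $K$. Consider the evaluation map $\delta\colon K\to(C(K)^*,w^*)$, $\delta(k)=\delta_k$. Because $C(K)$ separates the points of the compact Hausdorff space $K$ (Urysohn), $\delta$ is injective; it is $w^*$-continuous, as $f(k_\alpha)\to f(k)$ for every $f\in C(K)$ whenever $k_\alpha\to k$; and a continuous injection from a compact space into a Hausdorff space is a homeomorphism onto its image. Thus $K$ is homeomorphic to $\delta(K)\subseteq(C(K)^*,w^*)$, and since {\st} is hereditary, $(3)$ forces $\delta(K)$, hence $K$, to have {\st}.

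The substance is $(4)\Rightarrow(1)$, where scatteredness must be used. Here I would construct an equivalent strictly convex dual norm directly from a {\st}-sequence $(\mathscr{V}_j)_{j<\omega}$ for $K$. The crucial structural fact is that $K$ scattered forces every $\mu\in C(K)^*$ to be purely atomic, so that the mass of $\mu$ is genuinely distributed over the points of $K$; moreover, for each open $V\subseteq K$ the total-variation functional $\mu\mapsto|\mu|(V)=\sup\{\mu(f):f\in C(K),\ \n{f}\le1,\ \{f\ne0\}\subseteq V\}$ is a $w^*$-lower semicontinuous convex seminorm, being a supremum of $w^*$-continuous linear functionals. I would then seek a norm of the type $\tn{\mu}^2=\n{\mu}^2+\sum_{j<\omega}2^{-j}\Psi_j(\mu)$, where each $\Psi_j$ is a bounded, nonnegative, $w^*$-lower semicontinuous convex function built from the family $\mathscr{V}_j$ through the seminorms $|\cdot|(V)$, $V\in\mathscr{V}_j$. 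The verification of strict convexity first reduces to the positive cone: if $\tn{\cdot}$ fails to be strictly convex along a segment, convexity of each summand forces $\n{\cdot}$ to be affine there, which for the variation norm is the no-cancellation condition, allowing one to reduce to two distinct positive measures $\mu\ne\nu$ of equal norm. Such measures satisfy $\mu(V)\ne\nu(V)$ for some open $V$, indeed for some $V$ that separates, in the sense of Definition \ref{defn_star}, the atom at which they differ from the relevant comparison points; the role of the {\st}-sequence is to ensure that this distinguishing $V$ lies inside some $\mathscr{V}_j$ in a sufficiently isolated way that the corresponding term of $\Psi_j$ is strictly convex across $[\mu,\nu]$, producing the required contradiction.

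I expect the main obstacle to be exactly this last construction: converting the point-separation data carried by the {\st}-sequence on $K$ into honest strict convexity of a single, countably generated norm on the measures. Two difficulties must be reconciled. First, atoms sitting at non-isolated (higher Cantor--Bendixson rank) points of $K$ are invisible to any single $w^*$-continuous functional, so such masses can only be approached through the $w^*$-lower semicontinuous functionals $|\cdot|(V)$ over shrinking open $V$, and a layered bookkeeping along the Cantor--Bendixson stratification of $K$ is likely unavoidable. Second, each family $\mathscr{V}_j$ may be uncountable, so $\Psi_j$ must aggregate uncountably many seminorms into one convex $w^*$-lower semicontinuous function while retaining enough strict convexity to separate any prescribed pair; making this aggregation interlock cleanly with the no-cancellation reduction is the delicate point. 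Once $(4)\Rightarrow(1)$ is in place, Theorem \ref{thm_star_slices} returns us from $(1)$ to $(2)$, and $(2)\Rightarrow(3)\Rightarrow(4)$ completes the equivalence.
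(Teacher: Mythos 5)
This theorem is not proved in the paper at all: it is quoted from \cite[Theorem 3.1]{ost:12}, and the only commentary the paper offers is that (3) $\Rightarrow$ (4) is trivial because {\st} is hereditary and $K$ embeds in $(S_{C(K)^*},w^*)$ via Dirac measures. Your treatment of the easy implications is correct and coincides with that commentary: (1) $\Leftrightarrow$ (2) is indeed an instance of Theorem \ref{thm_star_slices} with $F=C(K)$ (noting that an equivalent $w^*$-lower semicontinuous norm on a dual space is a dual norm), (2) $\Rightarrow$ (3) is formal, and your Dirac-embedding argument for (3) $\Rightarrow$ (4) is exactly the one the paper indicates.

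The gap is in (4) $\Rightarrow$ (1), which is the entire content of the theorem, and you have not closed it --- you say so yourself. What you have is a correct identification of the available tools ($C(K)^*=\ell_1(K)$ by Rudin's theorem for scattered $K$; $w^*$-lower semicontinuity of $\mu\mapsto|\mu|(V)$ for open $V$; the no-cancellation reduction to positive measures) together with an accurate description of the two obstacles, but no construction of the functions $\Psi_j$ and no verification that a failure of strict convexity contradicts the {\st}-property of $K$. That step cannot be waved through: a pair of distinct positive atomic measures of equal norm is separated by the {\st}-sequence only pointwise on $K$, atom by atom, and turning this into strict convexity of a single countably generated $w^*$-lower semicontinuous norm requires the kind of supremum-over-finite-disjoint-subfamilies machinery (in the spirit of Deville's master lemma and the $F$-distance apparatus of \cite{ot:08}) that occupies the proof in \cite{ost:12}. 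Your phrase ``the role of the {\st}-sequence is to ensure that this distinguishing $V$ lies inside some $\mathscr{V}_j$ in a sufficiently isolated way'' is precisely the assertion that needs proof, not a proof. As it stands the argument establishes (1) $\Leftrightarrow$ (2) $\Rightarrow$ (3) $\Rightarrow$ (4) but not the return to (1).
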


As above, (3) $\Rightarrow$ (4) is trivial, since {\st} passes to subspaces and $K$ embeds homeomorphically inside $(S_{C(K)^*},w^*)$, taken with respect to the canonical variation norm on $C(K)^*$. The most involved implication is (4) $\Rightarrow$ (1).

Of course, if we work in a dual space $X^*$ then we have the $w^*$-compactness of $B_{X^*}$, and the fact, by the Krein-Milman Theorem, that $B=\cl{\conv}^{w^*}(\ext(B))$ whenever $B\subseteq X^*$ is $w^*$-compact and convex (where $\ext(B)$ is the set of extreme points of $B$). The existence of extreme points can be used sometimes to pass from general $w^*$-open subsets of $B$ to $w^*$-open slices because, by Choquet's Lemma, any extreme point $e\in\ext(B)$ has a local base of $w^*$-open slices of $B$.

It turns out that in this note we require a slightly greater abundance of extreme points. Recall that a compact space $K$ is scattered precisely when $C(K)$ is an \emph{Asplund} space. The property of being an Asplund space has many equivalent formulations. We will require that fact that $X$ is Asplund if and only if $X^*$ has the \emph{Krein-Milman property}, that is, $B=\cl{\conv}^{\ndot}(\ext(B))$ whenever $B\subseteq X^*$ is norm-closed, convex and bounded.

The next theorem is our main result.

\begin{thm}\label{main_thm} Let $X$ be an Asplund space. Then the following are equivalent.
\begin{enumerate}
\item $X^*$ admits an equivalent strictly convex dual norm;
\item $(X^*,w^*)$ has {\st} with slices;
\item $(X^*,w^*)$ has {\st};
\item $(S_{X^*},w^*)$ has {\st}.
\end{enumerate}
\end{thm}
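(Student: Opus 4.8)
The plan is to run the cycle $(1)\Leftrightarrow(2)\Rightarrow(3)\Rightarrow(4)\Rightarrow(1)$, with essentially all of the work in the last implication. The equivalence $(1)\Leftrightarrow(2)$ is immediate from Theorem \ref{thm_star_slices} applied to the Banach space $X^*$ with the $1$-norming subspace $F=X\subseteq X^{**}$: then $\sigma(X^*,F)$ is the $w^*$-topology, and since a norm on $X^*$ is $w^*$-lower semicontinuous exactly when it is a dual norm, conditions $(1)$ and $(2)$ of that theorem become our $(1)$ and $(2)$. The implication $(2)\Rightarrow(3)$ is trivial, obtained by forgetting that the members of the families are slices, and $(3)\Rightarrow(4)$ is trivial because {\st} is hereditary and $S_{X^*}$ is a subspace of $(X^*,w^*)$. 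So everything reduces to $(4)\Rightarrow(1)$.

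For $(4)\Rightarrow(1)$ I would first descend to the extreme points. Put $E=\ext(B_{X^*})$; as $E\subseteq S_{X^*}$, heredity of {\st} yields a {\st}-sequence $(\mathscr{U}_j)_{j<\omega}$ of $w^*$-open subsets of $(E,w^*)$. The first key move is to trade these arbitrary open sets for slices. For each $j$, let $\mathscr{S}_j$ consist of all $w^*$-open slices $S$ of $B_{X^*}$ whose trace $S\cap E$ lies inside some member of $\mathscr{U}_j$. Then $(\mathscr{S}_j)_{j<\omega}$ witnesses {\st} with slices on $(E,w^*)$: given $x,y\in E$, take the index $j$ supplied by $(\mathscr{U}_j)$, so that every member of $\mathscr{U}_j$ meets $\{x,y\}$ at most once; since each $S\in\mathscr{S}_j$ has $S\cap E$ inside such a member, every slice of $\mathscr{S}_j$ also meets $\{x,y\}$ at most once, which is condition (2), and if, say, $x\in U\in\mathscr{U}_j$, then because $x$ is extreme, Choquet's Lemma produces a $w^*$-open slice $S\ni x$ of $B_{X^*}$ with $S\cap E\subseteq U$, whence $S\in\mathscr{S}_j$ and condition (1) holds. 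This is exactly where extremality is spent: it is what lets one shrink an arbitrary neighbourhood to a slice.

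The second and main step is to manufacture an equivalent strictly convex dual norm from the slice families $(\mathscr{S}_j)$, and this is where I expect the real difficulty. Writing each $S\in\bigcup_j\mathscr{S}_j$ as $S=\{x\in B_{X^*}:\langle f_S,x\rangle>\alpha_S\}$ with $f_S\in X$, one attaches to $S$ a bounded, $w^*$-lower semicontinuous convex function that bulges across $S$, and assembles a single equivalent dual norm $\tn{\cdot}$ by a weighted square-sum of these (with weights $2^{-j}$ to secure convergence), added to the original dual norm for coercivity---this is the machinery behind the implication $(2)\Rightarrow(1)$ of Theorem \ref{thm_star_slices}. Because the families $(\mathscr{S}_j)$ separate the points of $E$, this construction will render $\tn{\cdot}$ rotund in every direction detected by the slices, and in particular strictly convex among the extreme points of $B_{X^*}$.

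The genuine obstacle is to propagate rotundity from $E$ to the whole of $S_{X^*}$, and it is precisely here that the \emph{Krein--Milman property} of the Asplund space $X^*$ is indispensable. Suppose, for contradiction, that the unit sphere of $\tn{\cdot}$ carried a nondegenerate segment $[a,b]$; then every slice-derived term, being convex, would be affine along $[a,b]$. I would use the identity $B_{X^*}=\cl{\conv}^{\ndot}(E)$ to approximate $a$ and $b$ \emph{in norm} by convex combinations of extreme points, and then argue that the slice functions being affine on $[a,b]$ forces the relevant extreme constituents into a common slice of every family $\mathscr{S}_j$---contradicting {\st} with slices on $(E,w^*)$. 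Ordinary Krein--Milman, giving only the $w^*$-closed convex hull $B_{X^*}=\cl{\conv}^{w^*}(E)$, would not suffice, since a bad segment could lie $w^*$-far from $E$ and evade this localisation; the norm-closed convex hull supplied by the Asplund hypothesis is exactly the extra abundance of extreme points that makes the transfer go through. Carrying out this last transfer carefully---together with the bookkeeping needed to see that the infinite combination really is an equivalent norm---is, I expect, the crux of the whole argument.
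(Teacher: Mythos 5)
Your treatment of the easy implications is correct, and your opening move for (4) $\Rightarrow$ (1) -- descend to $E=\ext(B_{X^*})$, then use Choquet's Lemma to replace arbitrary relatively $w^*$-open sets by $w^*$-open slices of $B_{X^*}$ whose traces on $E$ sit inside members of $\mathscr{U}_j$ -- is sound and is indeed where extremality gets spent in the paper as well. The problem is the final transfer step, which you correctly identify as the crux but then dispose of with a mechanism that does not work. Suppose $\tn{f}=\tn{g}=\tn{\frac12(f+g)}$ with $f\ne g$; convexity gives $\varphi_j(f)=\varphi_j(g)=\varphi_j(\frac12(f+g))=:\lambda_j$ for the slice-derived functions $\varphi_j(h)=\sup\{h(x)-a\}$ taken over slices of $B_{X^*}$ subordinate to $\mathscr{U}_j$. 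To contradict {\st} on $E$ you must produce two \emph{distinct extreme points} $d,e$ such that for every $j$ either both avoid $\bigcup\mathscr{U}_j$ or both lie in a single member of it. Approximating $f$ and $g$ in norm by convex combinations of extreme points gives you no control here: the combinations may involve many extreme points with small weights scattered over $E$, and nothing forces any particular pair of them into a common slice, nor forces them to satisfy $\varphi_j(d)=\varphi_j(e)=\varphi_j(\frac12(d+e))$, which is what Lemma \ref{convexity_lem} needs in order to manufacture a common slice. The natural repair -- pass to the face of $B_{X^*}$ cut out by accumulation points $\xi_j$ of the near-optimizing functionals $x_{k,j}$, and apply the Krein--Milman property to that face -- fails at the first hurdle if you work over all of $B_{X^*}$: one needs $\sup_{h}\varphi_j(h)=\lambda_j$ over the ambient set to know that $\xi_j(h)\leqslant\lambda_j+a_j$ holds globally, i.e.\ that the set $D=\{h:\xi_j(h)=\lambda_j+a_j\text{ for all }j\}$ is genuinely a face with $\ext(D)\subseteq\ext(B_{X^*})$, and there is no reason for $\lambda_j$ to be the supremum of $\varphi_j$ over $B_{X^*}$.

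This is exactly what the paper's central device supplies and what is missing from your sketch: a derivation process indexed by $(\omega\times\Q)^{<\omega}$ that repeatedly deletes from $B_{X^*}$ the translated halfspaces $H_{x,a+q}$ with $q$ chosen rationally in $[\lambda_{s,j},(1+2^{-n})\lambda_{s,j}]$, producing a decreasing chain of $w^*$-compact convex sets $B_{s_n}$ containing $f$ and $g$ whose intersection $B$ satisfies $\sup_{h\in B}\varphi_j(h)=\lambda_j$ for the functions $\varphi_j$ defined \emph{relative to} $B$ (more slices of the smaller set $B$ are subordinate to $\mathscr{U}_j$, and a compactness argument shows the relative and limiting suprema agree). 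Only after this localisation do the functionals $\xi_j$ define a norm-closed bounded convex face $D\ni f,g$ with $\ext(D)\subseteq\ext(B)\cap S_{X^*}$, to which the Krein--Milman property of the Asplund dual is applied to extract the two distinct extreme points $d,e$ that defeat the {\st}-sequence. Note also that the paper builds one norm per candidate sequence $(\mathscr{V}_j)$ and argues by contrapositive, so the norm must encode the whole derivation tree (one Minkowski functional per node $s$, index $j$ and rational $r$), not merely one term per slice family as in your weighted square-sum. Without the derivation process your argument has no way to pass from the bad pair $f,g$ to a bad pair of extreme points, so as it stands the proof is incomplete at its decisive step.
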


The implication (1) $\Rightarrow$ (2) is covered by Theorem \ref{thm_star_slices}, and (2) $\Rightarrow$ (3) and (3) $\Rightarrow$ (4) are trivial. Hence the only implication that requires work is (4) $\Rightarrow$ (1).

We conclude the Introduction by pointing out the following related result, which is stated in part.

\begin{thm}[{\cite[Theorem 1.6]{for:19}}]\label{Gdelta_thm}
Let $X$ be a Banach space. Then the following are equivalent.
\begin{enumerate}
\item $X^*$ admits an equivalent strictly convex dual norm;
\item $X^*$ admits an equivalent dual norm $\tndot$ such that $(S_{(X^*,\tndot)},w^*)$ has a $G_\delta$-diagonal.
\end{enumerate}
\end{thm}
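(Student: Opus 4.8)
To prove Theorem \ref{Gdelta_thm} I would treat the two implications separately, the first being elementary. For (1) $\Rightarrow$ (2), let $\tndot$ be a strictly convex dual norm provided by (1); I claim it already witnesses (2). Write $S=S_{(X^*,\tndot)}$ and let $B_X$ denote the closed unit ball of the predual norm of $\tndot$. For $x\in B_X$ and $n<\omega$ put $H_{x,n}=\set{u^*\in S}{\langle u^*,x\rangle>1-\tfrac1n}$, a $w^*$-open slice of $S$, and set $\mathscr{U}_n=\set{H_{x,n}}{x\in B_X}$. Each $\mathscr{U}_n$ covers $S$, since $\tn{u^*}=\sup_{x\in B_X}\langle u^*,x\rangle=1$ for $u^*\in S$. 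If distinct $x^*,y^*\in S$ were to lie together in some member of $\mathscr{U}_n$ for every $n$, then for each $n$ there would be $x_n\in B_X$ with $\langle\tfrac{x^*+y^*}{2},x_n\rangle>1-\tfrac1n$, forcing $\tn{\tfrac{x^*+y^*}{2}}=1$ and contradicting the strict convexity of $\tndot$. Hence $(\mathscr{U}_n)_{n<\omega}$ is a $G_\delta$-diagonal sequence for $(S,w^*)$ consisting of slices, which proves (2).

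For the converse I would first reduce to a slice condition. Applying Theorem \ref{thm_star_slices} to the Banach space $X^*$ with the $1$-norming subspace $X\subseteq X^{**}$---so that $\sigma(X^*,X)=w^*$ and the $w^*$-lower semicontinuous norms on $X^*$ are precisely the dual norms---identifies (1) with the statement that $(S_{X^*},w^*)$ has {\st} with slices. As the existence statement (1) is independent of the particular equivalent dual norm, so is this topological property; I may therefore test it on the sphere $S:=S_{(X^*,\tndot)}$ of the norm $\tndot$ furnished by (2). The task thus becomes: upgrade the $G_\delta$-diagonal on $(S,w^*)$ to {\st} with \emph{slices}, after which Theorem \ref{thm_star_slices} returns a strictly convex dual norm.

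My strategy is to split the pairs of $S$ by the geometry of $\tndot$. A \emph{round} pair $x^*\neq y^*$, meaning $\tn{\tfrac{x^*+y^*}{2}}<1$, is already separated by the slice-covers $\mathscr{U}_n$ of the first paragraph, exactly as there; these countably many families of slices settle all round pairs at once. The difficulty is confined to the \emph{flat} pairs, those lying on a common segment inside $S$. Here I would exploit that the inherited $G_\delta$-diagonal makes every $w^*$-compact subset of $S$ metrizable, hence second countable, so that on any such subset a countable set $\{x_i\}_{i<\omega}\subseteq X$ separates points; the slices $\set{u^*\in S}{\langle u^*,x_i\rangle>q}$ $(i<\omega,\ q\in\Q)$ then separate its points. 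Krein-Milman applied to the $w^*$-compact ball, with Choquet's Lemma, provides slice bases at the extreme points that such a subset must possess.

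The main obstacle, and where I expect the real work to lie, is to organise these separations into countably many families of slices valid \emph{across all flat pairs simultaneously}, meshed with the data of (2). Two issues compound here. First, the faces carrying the flat pairs are exposed only by functionals in $X^{**}$ and need not be $w^*$-closed, so the convenient $w^*$-compact, $X$-exposed faces $\set{u^*\in S}{\langle u^*,x\rangle=1}$ $(x\in B_X)$ may fail to capture every flat pair; I would address this by approximating through the $G_\delta$-diagonal covers. Second, the separating functionals $\{x_i\}$ depend on the piece of $S$ under consideration, so to reach the genuinely countable structure demanded by {\st} I would drive the whole construction by the given $G_\delta$-diagonal sequence $(\mathscr{U}_j)_{j<\omega}$---slicing each $U\in\mathscr{U}_j$ away from $\cl{\conv}^{w^*}(S\setminus U)$, with any point of $U$ that happens to lie in $\cl{\conv}^{w^*}(S\setminus U)$ deferred to finer stages $j$---and then run an inductive refinement, or equivalently build a $w^*$-lower semicontinuous strictly convex dual norm directly as a series, to secure the required uniformity. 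Controlling these non-exposed points uniformly over all flat pieces is the crux. Once the flat pairs are separated by countably many families of slices, combining them with the families $\mathscr{U}_n$ yields {\st} with slices on $(S,w^*)$, and Theorem \ref{thm_star_slices} supplies the strictly convex dual norm demanded by (1).
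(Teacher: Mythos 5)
Your proof of (1) $\Rightarrow$ (2) is correct and complete: the slice covers $\mathscr{U}_n=\set{H_{x,n}}{x\in B_X}$ together with the dual-norm identity $\tn{u^*}=\sup_{x\in B_X}\langle u^*,x\rangle$ and strict convexity do yield a $G_\delta$-diagonal sequence for $(S,w^*)$, and this is the standard argument for the easy half. Your reduction of (2) $\Rightarrow$ (1), via Theorem \ref{thm_star_slices} applied to $X^*$ with $F=X$ and the renorming-invariance of ``the sphere has {\st} with slices'', is also legitimate. Note that the present paper does not prove this theorem at all --- it is quoted from \cite{for:19} --- so the relevant comparison is with that proof and with the machinery of Sections \ref{sect_F-distance} and \ref{sect_proof} here, which the author says \emph{expands on} the derivation process of \cite{for:19}.

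The genuine gap is that for (2) $\Rightarrow$ (1) you have a strategy, not a proof, and you say so yourself: organising the separations of flat pairs into countably many families of slices ``valid across all flat pairs simultaneously'' is not a residual difficulty to be handled by ``an inductive refinement'' --- it is the entire content of the theorem, and is exactly what the set-theoretic derivation process of \cite{for:19} (indexed and controlled much as in Section \ref{sect_proof}, with convexity lemmas in the style of Lemma \ref{convexity_lem}) is built to do. Moreover, the specific fallback tools you invoke would fail in the stated generality. Choquet's Lemma gives slice bases at extreme points of $w^*$-compact convex sets, but, as you yourself observe, the faces carrying flat pairs are exposed by functionals $\xi\in X^{**}$ and need not be $w^*$-compact; and for merely norm-closed bounded convex subsets of a general dual there may be \emph{no} extreme points at all, since the Krein--Milman property for $X^*$ is equivalent to $X$ being Asplund. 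That equivalence is precisely why the present paper must add the Asplund hypothesis in Theorem \ref{main_thm} to make the extreme-point route (the set $D$ and $\ext(D)$ in Section \ref{sect_proof}) work. Since Theorem \ref{Gdelta_thm} is asserted for \emph{all} Banach spaces, any argument routed through extreme points of such non-$w^*$-compact faces cannot close the proof; the additional leverage has to come from the fact that the families in a $G_\delta$-diagonal sequence are \emph{covers} of the sphere, which must substitute for the missing extreme points in the derivation. As written, your proposal neither carries out the derivation nor explains how the covering property replaces the Asplund-type compactness, so the hard implication remains unproved.
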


It is worth comparing Theorems \ref{main_thm} and \ref{Gdelta_thm}. First, Theorem \ref{Gdelta_thm} applies to all Banach spaces. Second, Theorem \ref{Gdelta_thm} has no analogue of Theorem \ref{main_thm} (3). If $(X^*,w^*)$ has a $G_\delta$-diagonal then so does $(B_{X^*},w^*)$. This would force $X$ to be separable because compact spaces having $G_\delta$-diagonals are metrizable \cite[Theorem 2.13]{gru:84}. Third, in general, Theorem \ref{Gdelta_thm} (2) depends on the choice of equivalent dual norm. For example, let $K$ be the 1-point compactification of an uncountable discrete space. Then $C(K)^*$ admits an equivalent strictly convex dual norm e.g.~by Theorem \ref{thm_scattered} (though we stress that this fact was known long before the appearance of \cite{ost:12}). However, since $K$ is not metrizable, and embeds homeomorphically inside $(S_{C(K)^*},w^*)$, taken with respect to the canonical norm, it follows that the latter space does not have a $G_\delta$-diagonal. By contrast, owing to Theorem \ref{main_thm} (3), we can see that if Theorem \ref{main_thm} (4) holds with respect to one equivalent dual norm on $X^*$, then it will hold for all equivalent dual norms. 

The rest of the paper is organised as follows. The next section introduces a class of convex functions that is needed in the proof of Theorem \ref{main_thm} (4) $\Rightarrow$ (1), and relates these functions to the notion of $F$-distance, which was used in \cite{ost:12,ot:08}. The proof of Theorem \ref{main_thm} (4) $\Rightarrow$ (1) is given in Section \ref{sect_proof}.

\section{The functions $\varphi$ and $F$-distance}\label{sect_F-distance}

Let $X$ be a Banach space with norm $\ndot$, and $F \subseteq X^*$ a subspace. Let $\Gamma \subseteq S_F \times \R$ have the property that
\begin{equation}\label{eqn_21}
M \;:=\; \sup\set{-a}{(f,a) \in \Gamma} \;<\; \infty.
\end{equation}
Define a function $\map{\varphi}{X}{[0,\infty)}$ by
\[
\varphi(x) \;=\; \sup\big(\set{f(x)-a}{(f,a)\in\Gamma} \cup \{0\}\big).
\]
We can see that $\varphi(x)\leqslant \n{x}+M$ for all $x \in X$. It is clear that $\varphi$ is convex, $1$-Lipschitz and $\sigma(X,F)$-lower semicontinuous. As we will see below, the function $\varphi$ is closely related to the notion of {\em $F$-distance}, which was introduced in \cite[Proposition 1]{ot:08}. The next lemma is very similar to \cite[Proposition 2.2]{ost:12}. We include it because its conclusion is slightly more specific and its proof has been much simplified. 

\begin{lem}\label{convexity_lem}
Suppose that
\[
\lambda \;:=\; \varphi(x) \;=\; \varphi(y) \;=\; \varphi({\ts\frac{1}{2}(x+y)}) \;>\; 0.
\]
Then there exists a sequence $(f_k,a_k) \subseteq \Gamma$ such that
\[
 f_k(x) - a_k ,\, f_k(y)-a_k \;\to\; \lambda,
\]
as $k\to\infty$.
\end{lem}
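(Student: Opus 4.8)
The plan is to extract the required sequence directly from the value of $\varphi$ at the midpoint, and then to use the linearity of each $f_k$ together with the endpoint bounds built into the definition of $\varphi$ to force simultaneous convergence at both $x$ and $y$. First I would observe that, because $\lambda = \varphi(\ts\frac12(x+y)) > 0$, the supremum defining $\varphi$ at the midpoint cannot be supplied by the extra element $0$ appended inside the supremum; it must therefore be approached by genuine elements of $\Gamma$. Hence one may fix a sequence $(f_k,a_k) \subseteq \Gamma$ with
\[
f_k({\ts\tfrac12(x+y)}) - a_k \;\to\; \lambda .
\]

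The key step is then to split this quantity using linearity:
\[
f_k({\ts\tfrac12(x+y)}) - a_k \;=\; {\ts\tfrac12}\big((f_k(x)-a_k)+(f_k(y)-a_k)\big).
\]
By the definition of $\varphi$ and the hypotheses $\varphi(x)=\varphi(y)=\lambda$, we have the pointwise upper bounds $f_k(x)-a_k \leqslant \lambda$ and $f_k(y)-a_k \leqslant \lambda$ for every $k$. Thus the left-hand side is the average of two quantities, each at most $\lambda$, and this average tends to $\lambda$. Writing $f_k(x)-a_k = \lambda - s_k$ and $f_k(y)-a_k = \lambda - t_k$ with $s_k,t_k \geqslant 0$, the convergence of the average gives ${\ts\frac12}(s_k+t_k)\to 0$, whence $s_k,t_k \to 0$ separately by non-negativity. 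Therefore $f_k(x)-a_k \to \lambda$ and $f_k(y)-a_k \to \lambda$, which is exactly the conclusion.

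I do not expect a serious obstacle here; the argument is essentially a one-line squeeze, which is presumably the source of the simplification over \cite[Proposition 2.2]{ost:12}. The only two points that need a little care are, first, the remark that the hypothesis $\lambda > 0$ is what guarantees the approximating sequence genuinely lives in $\Gamma$ rather than being produced by the $\{0\}$ inside the supremum; and second, the recognition that the common upper bound $\lambda$ shared at both endpoints is precisely what upgrades ``the average tends to $\lambda$'' into ``each term tends to $\lambda$''. Were the two endpoint values of $\varphi$ not both equal to $\lambda$, this final step would fail, so it is the equality $\varphi(x)=\varphi(y)=\varphi(\ts\frac12(x+y))$ that carries the whole weight of the statement.
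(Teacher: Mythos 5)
Your proof is correct and is essentially the paper's own argument: both pick $(f_k,a_k)\in\Gamma$ nearly attaining the supremum at the midpoint, split $f_k(\frac12(x+y))-a_k$ by linearity, and use the upper bound $f_k(\cdot)-a_k\leqslant\lambda$ at each endpoint to squeeze both values up to $\lambda$. Your explicit remark that $\lambda>0$ is what ensures the approximating sequence comes from $\Gamma$ rather than the appended $0$ is a point the paper leaves implicit, but the route is the same.
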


\begin{proof}
The statement above is a simple consequence of the following easy fact. Define $\lambda=\max\{\varphi(x),\varphi(y)\}$ and suppose $\varphi(\frac{1}{2}(x+y))>r$. Then there exists $(f,a)\in\Gamma$ such that $f(\frac{1}{2}(x+y))-a>r$. Since $f(y)-a \leqslant \varphi(y)\leqslant \lambda$, we have
\[
f(x)-a \;=\; f(x+y)-2a - (f(y)-a) \;>\; 2r-\lambda.
\]
Similarly, $f(y)-a > 2r-\lambda$.
\end{proof}

We take a brief detour to explore the relationship between the functions $\varphi$ and so-called $F$-distance. We feel that this detour is justified, given the clear connections between this paper and \cite{ost:12,ot:08}, wherein $F$-distance is introduced and put to use. We define a seminorm on $X^{**}$ by
\[
\pn{\xi}{F} \;=\; \sup\set{\xi(f)}{f \in S_F}.
\]
This seminorm is easily seen to be $\sigma(X^{**},F)$-lower semicontinuous (observe that $\sigma(X^{**},F)$ is non-Hausdorff in general). Then, given a non-empty convex bounded set $A \subseteq X$, define the {\em $F$-distance} from $x \in X$ to $A$ by
\[
\psi(x) \;=\; \inf\set{\pn{x-\xi}{F}}{\xi \in \cl{A}^{w^*}},
\]
where $\cl{A}^{w^*}$ denotes closure of $A$ with respect to the $w^*$-topology of $X^{**}$.

This definition is arguably difficult to penetrate. We spend the remainder of this section showing that it can be recast in a more transparent way by considering the functions $\varphi$. Specifically, we will define a set $\Gamma$ as above in such a way that the associated function $\varphi$ is equal to $\psi$. Given $(f,a) \in S_F \times \R$, define the $\sigma(X,F)$-open halfspace
\[
H_{f,a} \;=\; \set{x \in X}{f(x)>a}.
\]
Then define
\[
\Gamma \;=\; \set{(f,a) \in S_F\times\R}{A \cap H_{f,a} = \varnothing}.
\]
Since there exists $x_0 \in A$, we have $-\n{x_0}\leqslant f(x_0) \leqslant a$ for all $(f,a)\in\Gamma$, which means $M$, as defined in \eqref{eqn_21}, is no greater than $\n{x_0}$, and hence $\varphi$ is well-defined.

Now let $x \in X$ and $(f,a) \in \Gamma$. Since $f(y)\leqslant a$ for all $y \in A$, we have $\xi(f)\leqslant a$ for all $\xi \in \cl{A}^{w^*}$. This means that $f(x)-a \leqslant f(x)-\xi(f) \leqslant \pn{x-\xi}{F}$, giving $\varphi(x)\leqslant \psi(x)$. To see the reverse inequality, we can assume that $\psi(x)>0$. Given $\lambda \in (0,\psi(x))$, define the $\sigma(X^{**},F)$-closed set
\[
B \;=\; \set{\eta \in X^{**}}{\pn{x-\eta}{F}\leqslant\lambda}.
\]
It is clear that $\cl{A}^{w^*} \cap B$ is empty. Since $\cl{A}^{w^*}$ is $w^*$-compact and the $w^*$-topology on $X^{**}$ is finer than $\sigma(X^{**},F)$, it follows that $B-\cl{A}^{w^*}$ is $\sigma(X^{**},F)$-closed. By the Hahn-Banach separation theorem, there exists $f \in S_F$ such that $\eta(f) > 0$ for all $\eta \in B-\cl{A}^{w^*}$. If we set
\[
a \;=\; \max\set{\xi(f)}{\xi \in \cl{A}^{w^*}},
\]
then $\eta(f)> a$ whenever $\eta \in B$. Moreover $H_{f,a} \cap A$ is empty and $(f,a) \in \Gamma$. Let $\xi \in S_{X^{**}}$ satisfy $\xi(f)=1$. Then $x - \lambda \xi \in B$, giving $f(x)-\lambda > a$ and $\varphi(x)\geqslant f(x)-a > \lambda$. Since this holds for all such $\lambda$, it follows that $\varphi(x) \geqslant \psi(x)$ as required. This concludes our discussion of $F$-distance.

\section{The proof of Theorem \ref{main_thm} (4) $\Rightarrow$ (1)}\label{sect_proof}

Hereafter, we will consider the dual space $X^*$ of $X$ itself, and set $F=X \subseteq X^{**}$, so that $\sigma(X^*,F)$ is simply the $w^*$-topology on $X^*$, the set $\Gamma$ above will be a subset of $S_X \times \R$, and $\map{\varphi}{X^*}{[0,\infty)}$ is given by
\[
 \varphi(f) \;=\; \sup\big(\set{f(x)-a}{(x,a) \in \Gamma} \cup \{0\}\big).
\]
Given $(x,a) \in S_X \times \R$, we define the $w^*$-open halfspace
\[
H_{x,a} \;=\; \set{f \in X^*}{f(x)>a}.
\]

Now we can present the main proof. It uses a set-theoretic derivation process that expands on the one used in the proof of Theorem \ref{Gdelta_thm}.

\begin{proof}[Proof of Theorem \ref{main_thm} (4) $\Rightarrow$ (1)]
We will prove the contrapositive. Let $\ndot$ denote the original dual norm on $X^*$. Let $(\mathscr{V}_j)$ be a sequence of families of $w^*$-open subsets of $S_{X^*}$. We will define a dual norm $\tndot$ on $X^*$ having the property that if $\tndot$ is not strictly convex, then $(\mathscr{V}_j)$ is not a {\st}-sequence for $(S_{X^*},w^*)$. 

Define
\[
 \mathscr{U}_j \;=\; \set{U\subseteq X^*}{U \text{ is $w^*$-open and }U \cap S_{X^*} \in \mathscr{V}_j}.
\]

To define $\tndot$, we will need to consider a derivation process on $B_{X^*}$ that is indexed by $(\omega \times \Q)^{<\omega}$, the set of finite sequences of elements of $\omega \times \Q$. For each $s \in (\omega \times \Q)^{<\omega}$, we will define a $w^*$-compact convex subset $B_s$ of $B_{X^*}$. Set $B_\varnothing = B_{X^*}$. Let $s \in (\omega\times\Q)^{<\omega}$ and suppose that $B_s$ has been defined. Given $j \in \omega$, set
\[
\Gamma_{s,j} \;=\; \set{(x,a) \in S_X \times (0,1)}{H_{x,a} \cap B_s \subseteq U\text{ for some }U \in \mathscr{U}_j}.
\]
Given $q\in\Q$, define the family of $w^*$-open halfspaces
\[
 \mathscr{S}_{s,j,q} \;=\; \set{H_{x,a+q}}{(x,a) \in \Gamma_{s,j}},
\]
and set
\begin{equation}\label{eqn_6}
 B_{s^\frown(j,q)} \;=\; B_s \setminus \bigcup\mathscr{S}_{s,j,q}.
\end{equation}

Given $s \in (\omega\times\Q)^{<\omega}$ and $j<\omega$, define $\map{\varphi_{s,j}}{X^*}{[0,\infty)}$ by 
\[
\varphi_{s,j}(f) \;=\; \sup\big(\set{f(x)-a}{(x,a) \in \Gamma_{s,j}} \cup \{0\}\big).
\]
All such maps are convex, $1$-Lipschitz and $w^*$-lower semicontinuous. The following observation will be of use later on. If $s \preccurlyeq t$ then $B_t\subseteq B_s$, meaning that $\Gamma_{s,j}\subseteq \Gamma_{t,j}$, and so $\varphi_{s,j}\leqslant \varphi_{t,j}$. 

Define
\[
 C_{s,j,r} \;=\; \set{f \in X^*}{\varphi_{s,j}(f)^2 + \varphi_{s,j}(-f)^2 \leqslant r},
\]
whenever $r\in\Q$ is positive. All such sets are symmetric, convex, $w^*$-compact and contain a norm-open neighbourhood of the origin. Let $\pndot{s,j,r}$ denote its corresponding $w^*$-lower semicontinuous Minkowski functional. Now define $\tndot$ by setting
\[
 \tn{f}^2 \;=\; \n{f}^2 + \sum_{s,j,r} c_{s,j,r}\pn{f}{s,j,r}^2,
\]
where the constants $c_{s,j,r}>0$ are chosen in such a way that the sum converges uniformly on bounded sets. Being $w^*$-lower semicontinuous, $\tndot$ is a dual norm.

Let us assume that $\tndot$ is not strictly convex. Then we can find distinct $f,g \in X^*$ such that
\[
 \tn{f} \;=\; \tn{g} \;=\; \tn{{\ts\frac{1}{2}(f+g)}},
\]
By standard convexity arguments (see e.g.~\cite[Fact II.2.3]{dgz:93}), we have
\[
 \n{f} \;=\; \n{g} \;=\; \n{{\ts\frac{1}{2}(f+g)}}.
\]
Therefore, by rescaling if necessary, we shall assume that $f,g,\frac{1}{2}(f+g) \in S_{X^*}$. Again, by convexity arguments
\[
 \pn{f}{s,j,r} \;=\; \pn{g}{s,j,r} \;=\; \pn{{\ts\frac{1}{2}(f+g)}}{s,j,r} 
\]
for all $s,j,r$ as above, and a final application of convexity arguments yields
\begin{equation}\label{eqn_1}
\varphi_{s,j}(f) \;=\; \varphi_{s,j}(g) \;=\; \varphi_{s,j}({\ts\frac{1}{2}(f+g)}),
\end{equation}
for all $s \in (\omega \times \Q)^{<\omega}$ and $j<\omega$.

Let $\lambda_{s,j}$ denote the common value in \eqref{eqn_1} above. We will build a sequence $s_0 \prec s_1 \prec s_2 \prec \dots$ such that $f,g \in B_{s_n}$ for all $n < \omega$. Set $s_0=\varnothing$. Given $s_n$, we build $s_{n+1}$ using $n+1$ intermediate stages
\[
s_n \;=:\; s_{n,0} \prec s_{n,1} \prec \dots \prec s_{n,n+1} \;=:\; s_{n+1}.
\]
Suppose that $s_{n,j}$, $j \leqslant n$, has been constructed. Take $q \in [\lambda_{s_{n,j},j},(1+2^{-n})\lambda_{s_{n,j},j}] \cap \Q$ and set $s_{n,j+1}={s_{n,j}}^\frown(j,q)$.  Since $\lambda_{s_{n,j},j} \leqslant q$, for all $(x,a) \in \Gamma_{s_{n,j},j}$ we have $f(x)-a \leqslant \lambda_{s_{n,j},j} \leqslant q$ and so $f \notin H_{x,a+q}$. Likewise $g \notin H_{x,a+q}$. Therefore
\[
f,g \in B_{s_{n,j}}\setminus\bigcup\mathscr{S}_{s_{n,j},j,q} \;=\; B_{{{s_{n,j}}^\frown(j,q)}} \;=\; B_{s_{n,j+1}}.
\]
In this way we build the sequence $(s_n)$.

Set $B=\bigcap_{n<\omega} B_{s_n}$. Notice that, for all $j<\omega$, $(\lambda_{s_n,j})$ is an increasing sequence bounded above by $1$. Given $j<\omega$, define $\lambda_j=\lim_{n\to\infty} \lambda_{s_n,j}$ and set
\[
 \Gamma_j \;=\; \set{(x,a) \in S_X \times (0,1)}{H_{x,a} \cap B \subseteq U\text{ for some }U \in \mathscr{U}_j}.
\]
Since $B \subseteq B_{s_n}$ for all $n<\omega$, we have $\Gamma_{s_n,j} \subseteq \Gamma_j$ for all $n,j<\omega$. Define also
\[
 \varphi_j(h) \;=\; \sup\big(\set{h(x)-a}{(x,a) \in \Gamma_j} \cup \{0\}\big),
\]
and 
\[
 \mu_j \;=\; \sup_{h \in B} \varphi_j(h).
\]

We proceed to make three claims. Our first claim is that
\begin{equation}\label{eqn_10}
\varphi_j(h) \;=\; \lim_{n \to \infty} \varphi_{s_n,j}(h), 
\end{equation}
for all $h \in B$. Indeed, $\varphi_{s_n,j}(h)\leqslant \varphi_j(h)$ because $\Gamma_{s_n,j} \subseteq \Gamma_j$. Now let $h \in B$ and $\ep>0$. Take $(x,a) \in \Gamma_j$ such that $h(x)-a > \varphi_j(h) - \ep$. Pick $U \in \mathscr{U}_j$ such that $H_{x,a} \cap B \subseteq U$. It follows that
\[
\cl{H_{x,a+\ep}}^{w^*} \cap B \;\subseteq\; H_{x,a} \cap B \;\subseteq\; U,
\]
so by a $w^*$-compactness argument there exists $n<\omega$ such that
\[
 \cl{H_{x,a+\ep}}^{w^*} \cap B_{s_n} \;\subseteq\; U.
\]
Hence $(x,a+\ep) \in \Gamma_{s_n,j}$, giving 
\[
\varphi_{s_n,j}(h) \;\geqslant\; h(x)-a-\ep \;>\; \varphi_j(h) - 2\ep. 
\]
This completes the proof of the claim.

Our second claim is that $\mu_j=\lambda_j$ for all $j<\omega$. First, by applying \eqref{eqn_10} we get
\begin{equation}\label{eqn_11}
\lambda_j \;=\; \varphi_j(f)\;=\; \varphi_j(g) \;=\; \varphi_j({\ts\frac{1}{2}(f+g)}) \;\leqslant\; \mu_j. 
\end{equation}
Now let $h \in B$. Given $n \geqslant j$, by \eqref{eqn_6} and the definition of $s_{n,j+1}$ we have
\[
 h \in B \;\subseteq\; B_{s_{n,j+1}} \;=\; B_{s_{n,j}}\setminus\bigcup\mathscr{S}_{s_{n,j},j,q},
\]
where $q \leqslant (1+2^{-n})\lambda_{s_{n,j},j}$. Therefore,
\[
 h(x) \;\leqslant\; a + q \;\leqslant\; a+(1+2^{-n})\lambda_{s_{n,j},j},
\]
whenever $(x,a) \in \Gamma_{s_{n,j},j}$. It follows that 
\[
\varphi_{s_n,j}(h)\;\leqslant\;\varphi_{s_{n,j},j}(h) \;\leqslant\; (1+2^{-n})\lambda_{s_{n,j},j} \;\leqslant\; (1+2^{-n})\lambda_j,
\]
whenever $n \geqslant j$. Consequently,
\[
 \varphi_j(h) \;=\; \lim_{n\to\infty} \varphi_{s_n,j}(h) \;\leqslant\; \lambda_j.
\]
Since this holds for all $h \in B$, we have $\mu_j \leqslant \lambda_j$. Coupled with \eqref{eqn_11}, this completes the proof of the second claim.

Our third claim is that, given $j<\omega$, if $\lambda_j=0$ then $\ext(B) \cap \bigcup \mathscr{U}_j = \varnothing$. Indeed, if $h \in \ext(B) \cap \bigcup\mathscr{U}_j$ then there exists $U \in \mathscr{U}_j$ such that $h \in U$ and, by Choquet's Lemma, there exists $(x,a) \in S_X \cap (0,1)$ such that
\[
 h \in H_{x,a} \cap B \;\subseteq\; U.
\]
Thus $(x,a) \in \Gamma_j$. From this we deduce that
\[
\lambda_j \;=\; \mu_j \;\geqslant\; \varphi_j(h) \;\geqslant\; h(x)-a \;>\; 0,
\]
which yields the third claim.

Now suppose that $\lambda_j>0$. According to Lemma \ref{convexity_lem}, there is a sequence of pairs $(x_{k,j},a_{k,j}) \subseteq \Gamma_j$ such that
\begin{equation}\label{eqn_30}
 f(x_{k,j})-a_{k,j},\, g(x_{k,j})-a_{k,j} \;\to\; \lambda_j,
\end{equation}
as $k\to\infty$. Without loss of generality we can assume that the sequences of real numbers $(f(x_{k,j}))$, $(g(x_{k,j}))$ and $(a_{k,j})$ converge. Set $a_j = \lim_{k\to\infty} a_{k,j}$ and let $\xi_j \in B_{X^{**}}$ be an accumulation point of the sequence $(x_{k,j})$ in the $w^*$-topology of $X^{**}$. From \eqref{eqn_30} we have 
\begin{equation}\label{eqn_31}
\xi_j(f) \;=\; \xi_j(g) \;=\; \lambda_j + a_j.
\end{equation}
Next, we claim that
\begin{equation}\label{eqn_32}
\xi_j(h) \leqslant \lambda_j + a_j, 
\end{equation}
for all $h \in B$. Indeed, let $h \in B$ and suppose that $\xi_j(h) \geqslant \lambda_j+a_j$. Extract a subsequence $(x_{k_i,j},a_{k_i,j})$ of $(x_{k,j},a_{k,j})$ such that $h(x_{k_i,j}) \to \xi_j(h)$ as $i \to \infty$. Then 
\[
 \lambda_j \;=\; \mu_j \;\geqslant\; \varphi_j(h) \;\geqslant\; h(x_{k_i,j})-a_{k_i,j} \;\to\; \xi_j(h)-a_j \;\geqslant\; \lambda_j,
\]
as $i\to\infty$. This forces $\xi_j(h)=\lambda_j+a_j$. Furthermore, observe that we have also shown that
\begin{equation}\label{eqn_33}
\phi_j(h) \;=\; \lambda_j,
\end{equation}
whenever $h \in B$ and $\xi_j(h)=\lambda_j+a_j$.

We can find such $\xi_j$ and $a_j$ whenever $\lambda_j>0$. Let $\xi \in S_{X^{**}}$ satisfy $\xi(\frac{1}{2}(f+g))=1$ and define
\[
 D \;=\; \set{h \in B}{\xi(h)=1 \text{ and }\xi_j(h)=\lambda_j+a_j \text{ for all }j<\omega\text{ satisfying }\lambda_j>0}.
\]
By \eqref{eqn_31}, we have $f,g \in D$. Using \eqref{eqn_32} and the fact that $D\subseteq S_{X^*}$, we can deduce that $\ext(D)\subseteq \ext(B) \cap S_{X^*}$. Since $X$ is an Asplund space and $D$ is a norm-closed bounded convex set in $X^*$, we have $D=\cl{\aspan}^{\ndot}(\ext(D))$. In particular, since $f$ and $g$ are distinct, we can find distinct elements $d,e \in \ext(D)\subseteq\ext(B)\cap S_{X^*}$. Let $j<\omega$. Suppose that $\lambda_j=0$. Then from our third claim above we have $d,e \notin \bigcup\mathscr{U}_j$. Instead, if $\lambda_j>0$, then by \eqref{eqn_33} and Lemma \ref{convexity_lem}, there exists $(x,a) \in \Gamma_j$ such that 
\[
d(x)-a,\, e(x)-a \;>\; {\ts \frac{1}{2}}\lambda_j \;>\; 0,
\]
meaning that $d,e \in H_{x,a}$. According to the definition of $\Gamma_j$, it follows that there exists $U \in \mathscr{U}_j$ satisfying $d,e \in U$.

In summary, for all $j<\omega$, either $d,e \notin \bigcup\mathscr{U}_j$ or there exists $U \in \mathscr{U}_j$ such that $d,e \in U$. Since $d,e \in S_{X^*}$, the same observation applies to the families $\mathscr{V}_j$. It follows that $(\mathscr{V}_j)$ is not a {\st}-sequence for $S_{X^*}$. This completes the proof.
\end{proof}

We finish with a couple of open questions. We do not know if the assumption that $X$ is Asplund can be removed from Theorem \ref{main_thm}. Furthermore, we do not know if the assumption that $K$ is scattered can be removed from Theorem \ref{thm_scattered}. 

\end{document}